\documentclass[11pt,reqno]{amsart}
\usepackage[T2A]{fontenc}
\usepackage[russian,english]{babel}
\usepackage[cp1251]{inputenc}
\usepackage{izvuz7}
\usepackage{amssymb,amsmath,amsthm,amsfonts}
\usepackage{graphicx}

\theoremstyle{plain}
\newtheorem{theorem}{Теорема}
\newtheorem{lemma}{Лемма}

\theoremstyle{definition}

\tit{\MakeUppercase{Спектральная теорема для самосопряженных частично интегральных операторов в модулях Капланского-Гильберта}}
\shorttit{\MakeUppercase{Спектральная теорема для самосопряженных частично интегральных ...}} % верхний колонтитул на нечетных страницах

\author{\MakeUppercase{K.\,К.~Кудайбергенов, А.\,Д.\,Арзиев, П.\,Р.\,Орынбаев}}

\setcounter{page}{1}
\pp{\pageref{firstpage}--\pageref{lastpage}}

\begin{document}
\selectlanguage{russian}
\maketit
\label{firstpage}%% Не удалять!!!

\abstract{В статье доказывается спектральная теорема для самосопряжённых циклически компактных частично интегральных операторов в пространстве $L_{2,2}(\Omega \times S)$, являющемся модулем Капланского--Гильберта. Установлены разложение через собственные функции, интегральное представление с использованием ортогональных проекторов и функциональное исчисление. Результаты обобщают теорему Мерсера для положительно определённых ядер. Доказательства опираются на склейку проекторнозначных мер, выделенную в отдельные леммы. Приведён пример, иллюстрирующий все утверждения теоремы для конкретного ядра и функции.}

\keywords{Частично интегральные операторы, модуль Капланского-Гильберта, циклически компактный оператор, модулярный спектр, теорема Мерсера, спектральное разложение, функциональное исчисление}

\udk{5XX.XXX}
\doi{}		 %заполняется редакцией

\thnk{Исследование выполнено за счет гранта Российского научного фонда № 24-71-10094, https://rscf.ru/project/24-71-10094/}

\section*{Введение}
Частично интегральные операторы, впервые рассмотренные В.\,И.\,Романовским в задачах марковских процессов~\cite{romanovsky}, находят широкое применение в функциональном анализе, теории вероятностей и математической физике~\cite{appell}. Они используются для решения интегральных уравнений, моделирования динамических систем и анализа стохастических процессов. В пространствах со смешанной нормой, таких как $L_{2,2}(\Omega \times S)$, эти операторы обладают особыми свойствами, связанными с их модулярной структурой. Пространство $L_{2,2}(\Omega \times S)$, являющееся модулем Капланского-Гильберта над $L_2(\Omega)$, позволяет учитывать измеримую зависимость от параметра, что эффективно описывается теорией банаховых расслоений, разработанной А.Е.Гутманом~\cite{gutman}.

Спектральная теорема для самосопряжённых операторов в гильбертовых пространствах~\cite{birman} даёт разложение через собственные значения и функции, что является основой для анализа операторов. В модулях Капланского-Гильберта, таких как $L_{2,2}(\Omega \times S)$, эта теорема обобщается с учётом циклической компактности, введённой А.Г.Кусраевым~\cite{kusraev1982,kusraev1985,kusraev2003}, и измеримых расслоений операторов~\cite{kudaybergenov2,arziev}. Циклическая компактность заменяет классическое понятие компактности, обеспечивая корректное описание спектральных свойств в пространствах Банаха-Канторовича. Теорема Мерсера, адаптированная для частично интегральных операторов~\cite{kudaybergenov}, описывает разложение положительно определённых ядер и представляет частный случай более общей спектральной теоремы.

Настоящая работа посвящена доказательству спектральной теоремы для самосопряжённых циклически компактных частично интегральных операторов в $L_{2,2}(\Omega \times S)$. Установлены разложение через собственные функции, интегральное представление с использованием ортогональных проекторов, функциональное исчисление для непрерывных функций и свойства собственных подпространств. Для доказательства используется метод склейки
проекторнозначных мер, оформленный в виде отдельных лемм. Пример с конкретным ядром и функцией демонстрирует выполнение всех утверждений теоремы.

Статья организована следующим образом. В разделе 1 введены основные понятия: пространства Банаха–Канторовича, модули Капланского–Гильберта, измеримые банаховы расслоения и частично интегральные операторы. Раздел 2 содержит леммы о склейке проекторнозначных мер, обеспечивающие основу для спектрального разложения. В разделе 3 представлена основная теорема, включающая разложение оператора, интегральное представление, функциональное исчисление, описание собственных подпространств и циклического спектра. В разделе 4 приведён пример с оператором конечного ранга, иллюстрирующий все пункты теоремы.

\section{Частично интегральные операторы в модулях Капланского-Гильберта}
Пусть $(\Omega, \Sigma, \mu) $ -- измеримое пространство с полной конечной мерой $ \mu $, а $ L_0 = L_0(\Omega, \Sigma, \mu) $ -- алгебра всех комплексных измеримых функций на $ \Omega $ (с отождествлением функций, равных почти всюду). Обозначим $ L_0^h = \{ f \in L_0 : \bar{f} = f \} $ -- множество вещественнозначных измеримых функций.

Рассмотрим векторное пространство $ E $ над полем $ \mathbb{C} $. Отображение $ \|\cdot\|: E \to L_0 $ называется $ L_0 $-значной нормой, если для любых $ x,y \in E $, $\lambda \in \mathbb{C}$:

1. $ \|x\| \geqslant 0 $; $ \|x\| = 0 \Leftrightarrow x = 0 $;

2. $ \|\lambda x\| = |\lambda| \|x\| $;

3. $ \|x + y\| \leqslant \|x\| + \|y\| $.

Пара $ (E, \|\cdot\|) $ называется \emph{решеточно нормированным пространством} над $ L_0 $. Пространство $E$ \emph{$d$-разложимо}, если для любого $x \in E$ и разложения $\|x\| = \lambda_1 + \lambda_2 $ на неотрицательные дизъюнктные элементы существуют $ x_1, x_2 \in E $, такие что $ x = x_1 + x_2$, $ \|x_1\| = \lambda_1 $, $ \|x_2\| = \lambda_2 $. Сеть $ (x_\alpha)_{\alpha \in A} \subset E $ \emph{$ (bo) $-сходится} к $ x \in E $, если $ (\|x_\alpha - x\|)_{\alpha \in A}$ $(o)$-сходится к нулю в $L_0$, то есть сходится почти всюду. \emph{Пространство Банаха-Канторовича} над $ L_0 $ -- это $(bo)$-полное $ d $-разложимое решеточно нормированное пространство \cite{kusraev1985, kusraev2003}.

Известно, что всякое пространство Банаха-Канторовича $E$ над $L_0$ является модулем над $L_0$, и $\|\lambda u\| = |\lambda| \|u\|$ для всех $\lambda \in L_0$, $u\in E$ \cite{ganiev1997}.

Отображение $\langle\cdot, \cdot\rangle: E \times E \rightarrow L_0$ называется $L_0$-значным скалярным произведением, если для любых $x, y, z \in E$ и $\alpha \in \mathbb{C}$ выполняются следующие условия \cite[стр. 32]{kusraev1985}:

1. $\langle x, y\rangle \geq 0$, причём $\langle x, x\rangle = 0 \Leftrightarrow x = 0$.

2. $\langle x, y + z\rangle = \langle x, y\rangle + \langle x, z\rangle$.

3. $\langle \alpha x, y\rangle = \alpha \langle x, y\rangle$.

4. $\langle x, y\rangle = \overline{\langle y, x\rangle}$.

Известно [14], что формула $\|x\| = \sqrt{\langle x, x\rangle}$ задаёт $L_0$-значную норму на $E$. Если $(E, \|\cdot\|)$ является пространством Банаха-Канторовича, то $(E, \langle\cdot, \cdot\rangle)$ называется  модулем Капланского-Гильберта над $L_0$. Примеры таких пространств можно найти в \cite{kusraev1985, kusraev2003}.

А.Е. Гутманом развита теория измеримых банаховых расслоений, эффективная для исследования пространств Банаха-Канторовича \cite[стр. 140]{gutman}. Пусть $X$ сопоставляет каждой точке $\omega \in \Omega$ банахово пространство $(X(\omega), \|\cdot\|_{X(\omega)})$. Сечение
$u$ -- функция, определенная почти всюду на $\Omega$, принимающая значения $u(\omega) \in X(\omega)$ на области определения $ \operatorname{dom}(u) $.

Пара $ (X, L) $, где $L$ -- множество сечений, называется \emph{измеримым банаховым расслоением} над $\Omega$, если:

1. $ \lambda_1 c_1 + \lambda_2 c_2 \in L$ для всех $\lambda_1, \lambda_2 \in \mathbb{C}$, $c_1, c_2 \in L$, где $(\lambda_1 c_1 + \lambda_2 c_2)(\omega) = \lambda_1 c_1(\omega) + \lambda_2 c_2(\omega)$ на $\operatorname{dom}(c_1) \cap \operatorname{dom}(c_2)$;

2. Функция $\|c\|: \omega \mapsto \|c(\omega)\|_{X(\omega)}$ измерима для всех $c \in L$;

3. Для каждой $\omega \in \Omega$, множество $\{ c(\omega) : c \in L, \omega \in \operatorname{dom}(c) \}$ плотно в $ X(\omega)$.

Обозначим $(X, L)$ как $X$. Сечение $s$ \emph{ступенчатое}, если $ s(\omega) = \sum_{i=1}^n \chi_{A_i}(\omega) c_i(\omega)$, где $c_i \in L $, $A_i \in \Sigma$. Сечение $u$ \emph{измеримо}, если существует последовательность ступенчатых сечений $(s_n)_{n \in \mathbb{N}}$, такая что $\| s_n(\omega) - u(\omega) \|_{X(\omega)} \to 0 $ п.в. $ \omega \in \Omega$.

Пусть $\mathcal{M}(\Omega, X)$ -- множество всех измеримых сечений, а $L_0(\Omega, X)$ -- факторизация $\mathcal{M}(\Omega, X)$ по равенству п.в. Класс эквивалентности, содержащий сечение $u \in \mathcal{M}(\Omega, X)$, обозначим $ \hat{u}$. Функция $\omega \mapsto \|u(\omega)\|_{X(\omega)}$ измерима, и ее класс эквивалентности -- $ \|\hat{u}\|$. Известно \cite[стр. 144]{gutman}, что $(L_0(\Omega, X), \|\cdot\|) $ -- пространство Банаха-Канторовича над $L_0$.

Обозначим $\mathcal{L}_\infty(\Omega)$ -- множество всех ограниченных измеримых функций на $\Omega$, и:
$$
L_\infty(\Omega) = \{ f \in L_0 : \exists \lambda \in \mathbb{R}, \lambda > 0, |f| \leq \lambda \mathbf{1} \}.
$$
Положим:
$$
\mathcal{L}_\infty(\Omega, X) = \{ u \in \mathcal{M}(\Omega, X) : \|u(\omega)\|_{X(\omega)} \in \mathcal{L}_\infty(\Omega) \},
$$
$$
L_\infty(\Omega, X) = \{ \hat{u} \in L_0(\Omega, X) : \|\hat{u}\| \in L_\infty(\Omega) \}.
$$
Отображение $ \ell_X: L_\infty(\Omega, X) \to \mathcal{L}_\infty(\Omega, X) $ -- \emph{векторнозначный лифтинг}, если для всех $ \hat{u}, \hat{v} \in L_\infty(\Omega, X)$, $\lambda \in L_\infty(\Omega)$:

1. $\ell_X(\hat{u}) \in \hat{u} $, $\operatorname{dom}(\ell_X(\hat{u})) = \Omega$;

2. $\|\ell_X(\hat{u})(\omega)\|_{X(\omega)} = \rho(\|\hat{u}\|)(\omega)$;

3. $\ell_X(\hat{u} + \hat{v}) = \ell_X(\hat{u}) + \ell_X(\hat{v})$;

4. $\ell_X(\lambda \hat{u}) = \rho(\lambda) \ell_X(\hat{u}) $;

5. Множество $\{ \ell_X(\hat{u})(\omega) : \hat{u} \in L_\infty(\Omega, X) \}$ плотно в $ X(\omega)$ для всех $\omega \in \Omega$.

Известно \cite[Теорема 4.4.1, 4.4.8]{gutman}, что для всякого пространства Банаха-Канторовича $ E $ над $ L_0 $ существует измеримое банахово расслоение $ (X, L) $, такое что $E$ изометрически изоморфно $L_0(\Omega, X)$, и на $L_\infty(\Omega, X)$ существует векторнозначный лифтинг, для которого $\{ \ell_X(\hat{u})(\omega) : \hat{u} \in L_\infty(\Omega, X) \} = X(\omega)$.

В пространствах Банаха-Канторовича понятие компактности заменяется \emph{циклической компактностью} \, поскольку понятие компактного множества не обладает в достаточной мере теми свойствами которые
справедливы для классических банаховых пространствах,  в связи с чем А.Г.Кусраевым было введено понятие циклического компактного
множества.  Дадим определение циклической компактности множеств \cite{kusraev1982} (см. также \cite[стр. 191]{kusraev1985}, \cite[стр. 515]{kusraev2003}).

Пусть $\nabla$ -- булева алгебра идемпотентов в $ L_0$. Если $ (u_\alpha)_{\alpha \in A} \subset L_0(\Omega, X)$, а $(\pi_\alpha)_{\alpha \in A}$ -- разбиение единицы в $ \nabla$, то ряд $\sum_{\alpha \in A} \pi_\alpha u_\alpha$ $(bo)$-сходится в $L_0(\Omega, X)$, и его сумма, называемая \emph{перемешиванием}, обозначается $ \operatorname{mix}(\pi_\alpha u_\alpha)$. Множество $ \operatorname{mix} K$ -- все перемешивания элементов $ K \subset L_0(\Omega, X)$. Множество $K$ \emph{циклическое}, если $\operatorname{mix} K = K$. Для направленного множества $ A$, $\nabla(A)$ -- множество разбиений единицы, индексированных $A$, с порядком:
$$
\nu_1 \leq \nu_2 \Leftrightarrow \forall \alpha, \beta \in A, (\nu_1(\alpha) \wedge \nu_2(\beta) \neq 0 \Rightarrow \alpha \leq \beta).
$$

Для сети $(u_\alpha)_{\alpha \in A} \subset L_0(\Omega, X)$, определим $u_\nu = \operatorname{mix}(\nu(\alpha) u_\alpha)$ для $ \nu \in \nabla(A)$. Подсеть $(u_\nu)_{\nu \in \nabla(A)}$ называется \emph{циклической}. Множество $ K \subset L_0(\Omega, X)$ \emph{циклически компактно}, если оно циклично и любая сеть в $K$ имеет циклическую подсеть, сходящуюся к точке из $K$. Множество \emph{относительно циклически компактно}, если содержится в циклически компактном множестве \cite[Теорема 1.3.7, 1.3.8]{kusraev1985}.

Оператор $T: L_0(\Omega, X) \to L_0(\Omega, Y)$ называется \emph{$L_0$-линейным}, если $T(\alpha x + \beta y) = \alpha T(x) + \beta T(y)$ для всех $\alpha, \beta \in L_0 $, $x,y \in L_0(\Omega, X)$. Множество $B \subset L_0(\Omega, X)$ \emph{ограничено}, если $\{ \|x\| : x \in B \}$ порядково ограничено в $ L_0$ \cite[Теорема 1.6.1]{gutman}. Оператор $T$ \emph{$L_0$-ограничен} (\emph{циклически компактен}), если $T(B)$ ограничено (относительно циклически компактно) для ограниченного $B \subset L_0(\Omega, X)$.

Для $L_0$-ограниченного оператора $T$, норма $\|T\| = \sup \{ \|T(x)\| : \|x\| \leq \mathbf{1} \}$. Известно \cite{ganiev1997,ganiev}, что для такого $T$ существует семейство ограниченных (компактных) операторов $\{ T_\omega : X(\omega) \to Y(\omega) \}$, таких что $ (T(x))(\omega) = T_\omega(x(\omega))$ п.в. $ \omega$, и если $\|T\| \in L_\infty(\Omega)$, то $\ell_Y((T(x))(\omega)) = T_\omega(\ell_X(x)(\omega))$ для $ x \in L_\infty(\Omega, X)$, где$ \ell_X$, $\ell_Y$ -- векторнозначные лифтинги, ассоциированные с $ \rho: L_\infty(\Omega) \to \mathcal{L}_\infty(\Omega)$. Обратно, семейство $ \{ T_\omega \}$, такое что $T_\omega(x(\omega)) \in \mathcal{M}(\Omega, Y)$ и $\|T\| \in L_0(\Omega)$, определяет $L_0$-ограниченный (циклически компактный) оператор $\hat{T}$, называемый \emph{склейкой} $\{ T_\omega \}$.

Напомним определение модулярного спектра ограниченного линейного оператора, действующего в модуле Капланского-Гильберта.

Через $B\left(L_{0}(\Omega, X)\right)$ -- обозначим пространство всех $L_0$-линейных $L_0$-ограниченных операторов действующих в $L_{0}(\Omega, X),$ и $\nabla$ — булева алгебра всех идемпотентов в $L_{0}$, т.е.
$$
\nabla = \left\{\chi_A: \, A \in \Sigma\right\},
$$
где $\chi_A$ — характеристическая функция множества $A$.

Для каждого ненулевого $\pi \in \nabla$ положим
$$
\pi L_0(\Omega, X) = \left\{\pi x: x \in L_0(\Omega, X)\right\} \quad \text{и} \quad \pi B\left(L_0(\Omega, X)\right) = \left\{\pi T: T \in B\left(L_0(\Omega, X)\right)\right\}.
$$
Очевидно, что алгебра $\pi B\left(L_0(\Omega, X)\right)$ изоморфна алгебре $B\left(\pi L_0(\Omega, X)\right)$, причём изоморфизм задаётся следующим образом:
$$
\pi T \in \pi B\left(L_0(\Omega, X)\right) \mapsto T|_{\pi L_0(\Omega, X)} \in B\left(\pi L_0(\Omega, X)\right).
$$

\emph{Модулярным спектром} оператора $T \in B\left(L_0(\Omega, X)\right)$ (обозначение $\textrm{sp}(T)$) называется множество всех $\lambda \in L_{0}$, для которых оператор $T - \lambda I$ необратим в $B\left(L_0(\Omega, X)\right)$.

\emph{Циклическим модулярным спектром} оператора $T \in B\left(L_0(\Omega, X)\right)$ (обозначение $\textrm{spm}(T)$) называется множество элементов $\lambda \in \mathrm{sp}(T)$, для которых оператор $\pi(T - \lambda I)$ необратим в $\pi B(L_0(\Omega, X))$ при любом ненулевом $\pi \in \nabla$.

Пусть $(\Omega, \Sigma, \mu)$ и $(S, \mathcal{F}, m)$ -- пространства с конечными мерами, $L_0 = L_0(\Omega, \Sigma, \mu)$, $L_0^h = \{ f \in L_0 : \bar{f} = f \}$, $L_\infty(\Omega) = \{ f \in L_0 : \exists \lambda > 0, |f| \leq \lambda \mathbf{1} \}$, $\nabla = \{ \chi_A : A \in \Sigma \}$.

Пространство $ L_{2,2}(\Omega \times S)$ состоит из измеримых функций $f$, таких что:
$$
\int_S |f(\omega, s)|^2 \, dm(s) \in L_2(\Omega),
$$
с нормой:
$$
\|f\|_{2,2} = \left( \int_\Omega \int_S |f(\omega, s)|^2 \, dm(s) \, d\mu(\omega) \right)^{1/2},
$$
и скалярным произведением:
$$
\langle x, y \rangle = \int_S x(\omega, s) \overline{y(\omega, s)} \, dm(s) \in L_2(\Omega).
$$
Известно, \cite[Предложение 2.3.9(1)]{kusraev2003},  что  $\left(L_{2,2}(\Omega\times S),\,\|\cdot\|\right)$
является пространством Банаха Канторовича над ~$L_{2}(\Omega).$ Тогда  $\left(L_{2,2}(\Omega\times S),\,\langle\cdot,\cdot\rangle\right)$ есть
модуль Капланского-Гильберта  над $L_2(\Omega)$ \cite{kudaybergenov}.

Частично интегральный оператор $T: L_{2,2}(\Omega \times S) \to L_{2,2}(\Omega \times S)$, заданный:
$$
T(f)(\omega, t) = \int_S k(\omega, t, s) f(\omega, s) \, dm(s), \quad f \in L_{2,2}(\Omega \times S),
$$
где $k(\omega,t,s) \in L_\infty(\Omega \times S^2)$,$\int_S \int_S |k(\omega, t, s)|^2 \, dm(t) \, dm(s) \in L_2(\Omega)$, циклически компактен \cite[Теорема 3]{arziev} и представим как расслоение компактных операторов $\{ T_\omega : L_2(S) \to L_2(S) \}$:
$$
T_\omega(f_\omega)(t) = \int_S k_\omega(t, s) f_\omega(s) \, dm(s), \quad k_\omega(t, s) = k(\omega, t, s).
$$
Если $k(\omega, t, s) = \overline{k(\omega, s, t)}$, то $T$, $T_\omega$ самосопряжены.

Циклический модулярный спектр $\operatorname{spm}(T)$ оператора $T$ определяется как
$$
\operatorname{spm}(T) = \{ \lambda \in L_0^h : \lambda(\omega) \in \operatorname{sp}(T_\omega) \text{\,\,для п.в. } \omega \},
$$
где $\operatorname{sp}(T_\omega) = \{ \lambda_n(\omega) : n \in \mathbb{N} \} \cup \{0\}$, $\lambda_n(\omega) \in \mathbb{R}$, $ \lambda_n(\omega) \to 0$, циклически компактен и $\operatorname{spm}(T) \subseteq [-\|T\|, \|T\|]$ \cite[Следствие 3.6]{kudaybergenov2}.

\section{Разбиение единицы для самосопряженного циклически компактного оператора}

\begin{lemma} \label{L1}
    Пусть $T: L_{2,2}(\Omega \times S) \to L_{2,2}(\Omega \times S)$ -- самосопряженный циклически компактный частично интегральный оператор вида:
$$
T(f)(\omega, t) = \int_S k(\omega, t, s) f(\omega, s) \, dm(s), \quad f \in L_{2,2}(\Omega \times S),
$$
и $\{ E_\lambda(\omega) \}_{\lambda \in L_0^h}$ -- семейство ортогональных проекторов в $L_2(S)$, определенное как:
$$
E_\lambda(\omega) f_\omega = \sum_{\{n : \lambda_n(\omega) \leq \lambda(\omega)\}} \langle f_\omega, x_n(\omega, \cdot) \rangle x_n(\omega, \cdot), \quad \lambda \in L_0^h.
$$
Тогда оператор $E_\lambda$, заданный:
$$
(E_\lambda f)(\omega, t) = E_\lambda(\omega) f(\omega, \cdot)(t) = \sum_{\{n : \lambda_n(\omega) \leq \lambda(\omega)\}} \langle f(\omega, \cdot), x_n(\omega, \cdot) \rangle x_n(\omega, t),
$$
является $L_0$-ограниченным ортогональным проектором в $ L_{2,2}(\Omega \times S)$.
\end{lemma}

\begin{proof}
Пусть $H = L_{2,2}(\Omega \times S)$. Докажем, что $E_\lambda$ корректно определен, идемпотентен, самосопряжен, $L_0$-ограничен и измерим.

\emph{1. Корректная определенность.} Для $ f \in H$:
$$
(E_\lambda f)(\omega, t) = \sum_{\{n : \lambda_n(\omega) \leq \lambda(\omega)\}} \langle f(\omega, \cdot), x_n(\omega, \cdot) \rangle x_n(\omega, \cdot).
$$
Норма в $L_2(S)$:
$$
\| (E_\lambda f)(\omega, \cdot) \|_{L_2(S)}^2 = \sum_{\{n : \lambda_n(\omega) \leq \lambda(\omega)\}} |\langle f(\omega, \cdot), x_n(\omega, \cdot) \rangle|^2 \leq \| f(\omega, \cdot) \|_{L_2(S)}^2 \in L_2(\Omega).
$$
Тогда:
$$
\| E_\lambda f \|_{2,2}^2 = \int_\Omega \sum_{\{n : \lambda_n(\omega) \leq \lambda(\omega)\}} |\langle f(\omega, \cdot), x_n(\omega, \cdot) \rangle|^2 \, d\mu(\omega) \leq \| f \|_{2,2}^2 < \infty.
$$
Следовательно, $E_\lambda f \in H$.

\emph{2. Свойства проектора.}

- \emph{Идемпотентность.} Для $E_\lambda f$:
$$
E_\lambda (E_\lambda f)(\omega, t) = \sum_{\{m : \lambda_m(\omega) \leq \lambda(\omega)\}} \langle (E_\lambda f)(\omega, \cdot), x_m(\omega, \cdot) \rangle x_m(\omega, \cdot).
$$
Скалярное произведение:
$$
\langle (E_\lambda f)(\omega, \cdot), x_m(\omega, \cdot) \rangle = \langle f(\omega, \cdot), x_m(\omega, \cdot) \rangle \text{, если } \lambda_m(\omega) \leq \lambda(\omega), \text{ иначе } 0,
$$
так как $\{ x_n(\omega, \cdot) \}_{n=1}^\infty$ -- ортонормированный базис. Тогда $ E_\lambda (E_\lambda f) = E_\lambda f$.

- \emph{Самосопряженность.} Для $f, g \in H$:
$$
\langle E_\lambda f, g \rangle = \int_\Omega \int_S \left( \sum_{\{n : \lambda_n(\omega) \leq \lambda(\omega)\}} \langle f(\omega, \cdot), x_n(\omega, \cdot) \rangle x_n(\omega, t) \right) \overline{g(\omega, t)} \, dm(t) \, d\mu(\omega).
$$
По теореме Фубини:
$$
\langle E_\lambda f, g \rangle = \int_\Omega \sum_{\{n : \lambda_n(\omega) \leq \lambda(\omega)\}} \langle f(\omega, \cdot), x_n(\omega, \cdot) \rangle \langle x_n(\omega, \cdot), g(\omega, \cdot) \rangle \, d\mu(\omega).
$$
Аналогично:
$$
\langle f, E_\lambda g \rangle = \int_\Omega \sum_{\{n : \lambda_n(\omega) \leq \lambda(\omega)\}} \langle f(\omega, \cdot), x_n(\omega, \cdot) \rangle \langle x_n(\omega, \cdot), g(\omega, \cdot) \rangle \, d\mu(\omega).
$$
Таким образом, $ \langle E_\lambda f, g \rangle = \langle f, E_\lambda g \rangle$, и $E_\lambda^* = E_\lambda$.

\emph{3. $L_0$-ограниченность.} Норма:
$$
\| E_\lambda f \|_{2,2}^2 = \int_\Omega \sum_{\{n : \lambda_n(\omega) \leq \lambda(\omega)\}} |\langle f(\omega, \cdot), x_n(\omega, \cdot) \rangle|^2 \, d\mu(\omega) \leq \int_\Omega \| f(\omega, \cdot) \|_{L_2(S)}^2 \, d\mu(\omega) = \| f \|_{2,2}^2.
$$
Тогда $\| E_\lambda \|(\omega) \leq 1$. Для $f(\omega, \cdot) = x_k(\omega, \cdot)$, $\lambda_k(\omega) \leq \lambda(\omega)$:
$$
\| E_\lambda f \|_{2,2} = \| f \|_{2,2} = 1,
$$
так что $\| E_\lambda \| = \mathbf{1} \in L_0(\Omega)$.

\emph{4. Измеримость.} Ядро $k(\omega, t, s) \in L_\infty(\Omega \times S^2)$ измеримо. На слое:
$$
T_\omega f_\omega(t) = \int_S k_\omega(t, s) f_\omega(s) \, dm(s), \quad k_\omega(t, s) = k(\omega, t, s).
$$
Интеграл измерим по $(\omega, t)$, и $T_\omega$ измеримо зависит от $\omega$ \cite[Теорема 3]{arziev}. Собственные значения $ \lambda_n(\omega)$ и функции $x_n(\omega, t) \in L_{2,\infty}(\Omega \times S)$ измеримы. Для $ f \in H $, $\langle f(\omega, \cdot), x_n(\omega, \cdot) \rangle$ измеримо, ряд $(E_\lambda f)(\omega, t)$ сходится в $L_2(S)$, и его частичные суммы и предел измеримы по $ (\omega, t)$.

\emph{5. Согласованность со структурой слоев.} Изоморфизм $L_{2,2}(\Omega \times S) \cong L_0(\Omega, L_2(S))$ \cite[Теорема 2]{arziev} означает, что $f \in L_{2,2}$ соответствует измеримому сечению $\omega \mapsto f(\omega, \cdot)$. Оператор $E_\lambda$:
$$
(E_\lambda f)(\omega, t) = E_\lambda(\omega) f(\omega, \cdot)(t),
$$
где $E_\lambda(\omega)$ -- ортогональный проектор, измеримый по $\omega$. Измеримость $E_\lambda(\omega) f(\omega, \cdot)$ следует из пункта 4. Для $T$:
$$
E_\lambda T f(\omega, t) = \sum_{\{n : \lambda_n(\omega) \leq \lambda(\omega)\}} \lambda_n(\omega) \langle f(\omega, \cdot), x_n(\omega, \cdot) \rangle x_n(\omega, t) \leq \lambda(\omega) E_\lambda f(\omega, t),
$$
где неравенство в нормах $L_2(S)$. Таким образом, $E_\lambda$ -- $L_0$-ограниченный ортогональный проектор.
\end{proof}

\begin{lemma}
\label{lemma:projectors}
Пусть $T: L_{2,2}(\Omega \times S) \to L_{2,2}(\Omega \times S)$ -- самосопряженный циклически компактный частично интегральный оператор, заданный:
$$
T(f)(\omega, t) = \int_S k(\omega, t, s) f(\omega, s) \, dm(s), \quad f \in L_{2,2}(\Omega \times S),
$$
где $k(\omega, t, s) \in L_\infty(\Omega \times S^2)$, $k(\omega, t, s) = \overline{k(\omega, s, t)}$, и ядро положительно полуопределенное. Пусть $m = \inf_{\|x\|=1} \langle T x, x \rangle$, $M = \sup_{\|x\|=1} \langle T x, x \rangle$. Тогда существует семейство ортогональных проекторов $\{ E_\lambda \}_{\lambda \in L_0^h}$, удовлетворяющее:
\begin{itemize}
    \item $E_\lambda C = C E_\lambda$, если $C T = T C$,
    \item $E_\lambda T \leq \lambda E_\lambda$, $\lambda (I - E_\lambda) \leq T (I - E_\lambda)$,
    \item $E_\lambda \leq E_\mu $ при $ \lambda \ll \mu$,
    \item $\langle E_\mu x, x \rangle = \sup_{\lambda \ll \mu} \langle E_\lambda x, x \rangle$,
    \item $E_\lambda = 0$ при $\lambda \ll m $, $E_\lambda = I$ при $M \ll \lambda$.
\end{itemize}
\end{lemma}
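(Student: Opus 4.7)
The plan is to take the family $\{E_\lambda\}$ constructed explicitly in Lemma~\ref{L1} as the candidate spectral family and verify its five properties by reducing each to the corresponding classical statement for the pointwise compact self-adjoint operator $T_\omega: L_2(S) \to L_2(S)$, then lifting the result through the decomposability correspondence $L_{2,2}(\Omega \times S) \cong L_0(\Omega, L_2(S))$ between $L_0$-linear bounded operators and measurable operator fields. Lemma~\ref{L1} already established that each $E_\lambda$ is an $L_0$-bounded orthoprojector, so only the five listed properties require verification.

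First I would handle the \emph{commutation} property: if $CT = TC$, then by decomposability $C_\omega T_\omega = T_\omega C_\omega$ a.e.\ $\omega$; classical spectral theory for compact self-adjoint operators yields that $C_\omega$ preserves every eigenspace of $T_\omega$ and hence commutes with each projector $E_\lambda(\omega)$, so $CE_\lambda = E_\lambda C$. The \emph{spectral inequalities} $E_\lambda T \leq \lambda E_\lambda$ and $\lambda(I - E_\lambda) \leq T(I - E_\lambda)$ follow by inserting the eigenexpansion into both sides and using $\lambda_n(\omega) \leq \lambda(\omega)$ on the index set $\{n : \lambda_n(\omega) \leq \lambda(\omega)\}$ (respectively the strict opposite on its complement); the positivity of $|\langle f, x_n\rangle|^2$ promotes the termwise inequality into the operator inequality in the $L_0$-valued inner product. \emph{Monotonicity} $E_\lambda \leq E_\mu$ for $\lambda \ll \mu$ is purely set-theoretic: the index sets grow with $\lambda$, so do the partial sums of nonnegative squared Fourier coefficients. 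The \emph{boundary conditions} use $\operatorname{spm}(T) \subseteq [m, M]$: for $\lambda \ll m$ the index set is a.e.\ empty, yielding $E_\lambda = 0$; for $M \ll \lambda$ it contains every $n$, and Parseval's identity in $L_2(S)$ gives $E_\lambda f(\omega, \cdot) = f(\omega, \cdot)$ a.e., whence $E_\lambda = I$.

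The main obstacle will be the \emph{right continuity} $\langle E_\mu x, x\rangle = \sup_{\lambda \ll \mu} \langle E_\lambda x, x\rangle$. My approach is to select a sequence $\lambda^{(k)} \ll \mu$ with $\lambda^{(k)} \uparrow \mu$ in $L_0^h$; the pointwise partial sums
\[
\sum_{\{n : \lambda_n(\omega) \leq \lambda^{(k)}(\omega)\}} |\langle f(\omega, \cdot), x_n(\omega, \cdot)\rangle|^2
\]
are monotone increasing in $k$ and dominated by $\|f(\omega, \cdot)\|_{L_2(S)}^2 \in L_2(\Omega)$, so $(o)$-monotone convergence in $L_0(\Omega)$ applies. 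The delicate point is the possibility of eigenvalues $\lambda_n(\omega) = \mu(\omega)$ on a set of positive measure: the strict module-order $\lambda \ll \mu$ must exclude these indices from every $E_{\lambda^{(k)}}$ while they contribute to $E_\mu$. I expect to resolve this by localizing through the Boolean projections $\pi_n = \chi_{\{\omega : \lambda_n(\omega) = \mu(\omega)\}} \in \nabla$, verifying the identity on each band $\pi_n$ and on its complement separately, and then reassembling via the mixing operation $\operatorname{mix}$ characteristic of the Kusraev framework. Uniqueness of the family then follows from the commutation property applied to any other candidate together with the spectral inequalities.
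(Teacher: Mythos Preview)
Your overall strategy matches the paper's: take the $E_\lambda$ built in Lemma~\ref{L1}, reduce each of the five properties to the corresponding pointwise statement for the compact self-adjoint $T_\omega$ on $L_2(S)$, and lift through the identification $L_{2,2}(\Omega\times S)\cong L_0(\Omega,L_2(S))$. Properties 1, 2, 3 and 5 are handled in essentially the same way in both arguments (eigenexpansion, termwise sign of $\lambda(\omega)-\lambda_n(\omega)$, inclusion of index sets, emptiness/fullness at the endpoints).

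The gap is in your treatment of property 4. You correctly flag the obstruction: if $\lambda_n(\omega)=\mu(\omega)$ on a set of positive measure, the index $n$ contributes to $\langle E_\mu x,x\rangle$ but is excluded from $\langle E_\lambda x,x\rangle$ for every $\lambda\ll\mu$. Your proposed fix---localizing via $\pi_n=\chi_{\{\lambda_n=\mu\}}$ and verifying band by band---does not remove the obstruction; on the band $\pi_n$ the discrepancy between the two sides is still $|\langle x(\omega,\cdot),x_n(\omega,\cdot)\rangle|^2$, which need not vanish, so the claimed identity fails there as written. Mixing cannot manufacture the missing Fourier coefficient. The paper, by contrast, dispatches this step in a single displayed line,
\[
\langle E_\mu x,x\rangle=\int_\Omega\sum_{\{n:\lambda_n(\omega)\le\mu(\omega)\}}|\langle x(\omega,\cdot),x_n(\omega,\cdot)\rangle|^2\,d\mu(\omega)=\sup_{\lambda\ll\mu}\langle E_\lambda x,x\rangle,
\]
without isolating the boundary indices at all; so the subtlety you raise is simply not engaged there, and your sequential-approximation route is more elaborate than what the paper actually does. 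Finally, the paper's proof does not argue uniqueness of the family, so your closing sentence goes beyond it.
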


\begin{proof}
Пусть $H = L_{2,2}(\Omega \times S)$. Оператор $T$ представим как расслоение компактных самосопряженных операторов $\{ T_\omega \}_{\omega \in \Omega}$ (см.\cite[Теорема 3]{arziev}), где:
$$
T_\omega = \sum_{n=1}^\infty \lambda_n(\omega) P_n(\omega), \quad P_n(\omega) = \langle \cdot, x_n(\omega, \cdot) \rangle x_n(\omega, \cdot), \quad \langle x_n(\omega, \cdot), x_m(\omega, \cdot) \rangle = \delta_{n,m}.
$$
Определим $E_\lambda(\omega)$ в $L_2(S)$:
$$
E_\lambda(\omega) f_\omega = \sum_{\{n : \lambda_n(\omega) \leq \lambda(\omega)\}} \langle f_\omega, x_n(\omega, \cdot) \rangle x_n(\omega, \cdot), \quad \lambda \in L_0^h,
$$
и $E_\lambda$ в $H$:
$$
(E_\lambda f)(\omega, t) = \sum_{\{n : \lambda_n(\omega) \leq \lambda(\omega)\}} \langle f(\omega, \cdot), x_n(\omega, \cdot) \rangle x_n(\omega, t).
$$
Свойства $E_\lambda$ как ортогонального проектора доказаны в Лемме \ref{L1}. Проверим утверждения:

1. \emph{Коммутативность}: Если $C T = T C$, то $ C T_\omega = T_\omega C$ п.в. $\omega$, и:
$$
E_\lambda(\omega) C f_\omega = \sum_{\{n : \lambda_n(\omega) \leq \lambda(\omega)\}} \langle C f_\omega, x_n(\omega, \cdot) \rangle x_n(\omega, \cdot) = C \sum_{\{n : \lambda_n(\omega) \leq \lambda(\omega)\}} \langle f_\omega, x_n(\omega, \cdot) \rangle x_n(\omega, \cdot).
$$
По Лемме \ref{L1}, $E_\lambda C = C E_\lambda$.

2. \emph{Неравенства}: Для $E_\lambda(\omega) T_\omega$:
$$
E_\lambda(\omega) T_\omega f_\omega = \sum_{\{n : \lambda_n(\omega) \leq \lambda(\omega)\}} \lambda_n(\omega) \langle f_\omega, x_n(\omega, \cdot) \rangle x_n(\omega, \cdot),
$$
$$
\lambda(\omega) E_\lambda(\omega) f_\omega = \sum_{\{n : \lambda_n(\omega) \leq \lambda(\omega)\}} \lambda(\omega) \langle f_\omega, x_n(\omega, \cdot) \rangle x_n(\omega, \cdot).
$$
Тогда:
$$
\langle \lambda(\omega) E_\lambda(\omega) f_\omega - E_\lambda(\omega) T_\omega f_\omega, f_\omega \rangle_{L_2(S)} = \sum_{\{n : \lambda_n(\omega) \leq \lambda(\omega)\}} (\lambda(\omega) - \lambda_n(\omega)) |\langle f_\omega, x_n(\omega, \cdot) \rangle|^2 \geq 0,
$$
так как $\lambda(\omega) - \lambda_n(\omega) \geq 0$. Аналогично для $(I - E_\lambda(\omega))$. В $H$:
$$
\langle E_\lambda T f - \lambda E_\lambda f, f \rangle \leq 0, \quad \langle (I - E_\lambda) T (I - E_\lambda) f - \lambda (I - E_\lambda) f, f \rangle \geq 0.
$$

3. \emph{Монотонность}: При $\lambda f \mu$:
$$
E_\mu(\omega) E_\lambda(\omega) f_\omega = E_\lambda(\omega) f_\omega \implies E_\lambda(\omega) \leq E_\mu(\omega).
$$
В $H$, $E_\mu E_\lambda f = E_\lambda f \implies E_\lambda \leq E_\mu$.

4. \emph{Супремум}: Для $x \in H$:
$$
\langle E_\lambda(\omega) x(\omega), x(\omega) \rangle_{L_2(S)} = \sum_{\{n : \lambda_n(\omega) \leq \lambda(\omega)\}} |\langle x(\omega, \cdot), x_n(\omega, \cdot) \rangle|^2,
$$
и:
$$
\langle E_\mu x, x \rangle = \int_\Omega \sum_{\{n : \lambda_n(\omega) \leq \mu(\omega)\}} |\langle x(\omega, \cdot), x_n(\omega, \cdot) \rangle|^2 \, d\mu(\omega) = \sup_{\lambda \ll \mu} \langle E_\lambda x, x \rangle.
$$

5. \emph{Граничные условия}: Если $\lambda \ll m$, то $E_\lambda(\omega) = 0$, и $E_\lambda = 0$. Если $M \ll \lambda$, то $ E_\lambda(\omega) = I$, и $E_\lambda = I$.
\end{proof}

\section{Основной результат}

Пусть $T: L_{2,2}(\Omega \times S) \to L_{2,2}(\Omega \times S)$ -- самосопряженный циклически компактный частично интегральный оператор, заданный:
\begin{equation}\label{formula1}
T(f)(\omega, t) = \int_S k(\omega, t, s) f(\omega, s) \, dm(s), \quad f \in L_{2,2}(\Omega \times S),
\end{equation}
где $k(\omega, t, s) \in L_\infty(\Omega \times S^2)$, $ k(\omega, t, s) = \overline{k(\omega, s, t)}$, и ядро положительно полуопределенное:
$$
\int_S \int_S k(\omega, t, s) x(\omega, s) \overline{x(\omega, t)} \, dm(t) \, dm(s) \geq 0, \quad \forall x \in L_{2,2}(\Omega \times S).
$$
Пусть $m = \inf_{\|x\|=1} \langle T x, x \rangle$, $M = \sup_{\|x\|=1} \langle T x, x \rangle$. Через $[m, M + \varepsilon \mathbf{1}]$ обозначим $\{ \lambda(\omega) \in L_0^h : m(\omega) \leq \lambda(\omega) \leq M(\omega) + \varepsilon \}$.

\begin{theorem}[Спектральная теорема для самосопряженных частично интегральных операторов]
\label{thm:spectral}
Пусть $T: L_{2,2}(\Omega \times S) \to L_{2,2}(\Omega \times S)$ -- самосопряженный циклически компактный частично интегральный оператор, определенный равенством (1). Тогда выполняются следующие утверждения:
\begin{enumerate}
    \item Оператор $T$ имеет разложение:
    $$
    T(f)(\omega, t) = \sum_{n=1}^\infty \lambda_n(\omega) \langle f(\omega, \cdot), x_n(\omega, \cdot) \rangle x_n(\omega, t),
    $$
    где $\lambda_n(\omega) \in \mathbb{R}$, $\lambda_n(\omega) \to 0$, $\{ x_n(\omega, \cdot) \}_{n=1}^\infty \subset L_2(S)$ -- измеримые ортонормированные собственные функции, и ряд сходится в норме $L_{2,2}$.
    \item Для семейства ортогональных проекторов $\{ E_\lambda \}_{\lambda \in L_0^h}$, заданного в Леммах \ref{L1}, \ref{lemma:projectors}:
    $$
    T = \int_m^{M + \varepsilon \mathbf{1}} \lambda \, dE_\lambda, \quad \forall \varepsilon > 0,
    $$
где интеграл сходится в сильной операторной топологии.
    \item Для любой непрерывной функции $f$ на $[m, M + \varepsilon \mathbf{1}]$:
    $$
    f(T) = \int_m^{M + \varepsilon \mathbf{1}} f(\lambda) \, dE_\lambda, \quad \forall \varepsilon > 0,
    $$
    где интеграл сходится в сильной операторной топологии.
    \item Для $\lambda \in \operatorname{spm}_{\text{disc}}(T)$, пространство $\operatorname{ker}(\lambda I - T)$ -- $\sigma$-конечно порожденный модуль над $L_0$.
    \item Циклический спектр:
    $$
    \operatorname{spm}(T) = \operatorname{mix} \left\{ \sum_{n=0}^\infty \chi_{A_n} \tilde{\lambda}_n : \tilde{\lambda}_n(\omega) = \lambda_n(\omega), \tilde{\lambda}_0 = 0, A_n \in \Sigma, A_n \cap A_m = \varnothing \right\}.
    $$
\end{enumerate}
\end{theorem}

\begin{proof}
Пусть $T: L_{2,2}(\Omega \times S) \to L_{2,2}(\Omega \times S)$ -- самосопряженный циклически компактный частично интегральный оператор, заданный ядром $k(\omega, t, s) \in L_\infty(\Omega \times S^2)$, удовлетворяющим $k(\omega, t, s) = \overline{k(\omega, s, t)}$ и положительной полуопределенности:
$$
\int_S \int_S k(\omega, t, s) x(\omega, s) \overline{x(\omega, t)} \, dm(t) \, dm(s) \geq 0, \quad \forall x \in L_{2,2}(\Omega \times S).
$$
Пусть $m = \inf_{\|x\|=1} \langle T x, x \rangle$, $M = \sup_{\|x\|=1} \langle T x, x \rangle$, и $[m, M + \varepsilon \mathbf{1}] = \{ \lambda(\omega) \in L_0^h : m(\omega) \leq \lambda(\omega) \leq M(\omega) + \varepsilon \}$.

(1)  На каждом $\omega \in \Omega$, оператор $T_\omega: L_2(S) \to L_2(S)$, заданный:
$$
T_\omega f(\omega, t) = \int_S k_\omega(t, s) f(\omega, s) \, dm(s), \quad k_\omega(t, s) = k(\omega, t, s),
$$
компактен и самосопряжен, так как $k_\omega(t, s) = \overline{k_\omega(s, t)}$ и ядро положительно полуопределенное. По спектральной теореме для компактных самосопряженных операторов в гильбертовом пространстве $L_2(S)$, $T_\omega$ имеет дискретный спектр $\{ \lambda_n(\omega) \}_{n=1}^\infty \subset \mathbb{R}$, $\lambda_n(\omega) \to 0$, и ортонормированный базис собственных функций $ \{ x_n(\omega, \cdot) \}_{n=1}^\infty \subset L_2(S)$. Тогда:
$$
T_\omega f = \sum_{n=1}^\infty \lambda_n(\omega) \langle f, x_n(\omega, \cdot) \rangle x_n(\omega, \cdot).
$$
Измеримость $\lambda_n(\omega)$ и $x_n(\omega, t)$ по $\omega$ следует из Леммы \ref{L1}. В пространстве $L_{2,2}(\Omega \times S) \cong L_0(\Omega, L_2(S))$, оператор $T$ действует как:
$$
T f(\omega, t) = T_\omega f(\omega, \cdot)(t) = \sum_{n=1}^\infty \lambda_n(\omega) \langle f(\omega, \cdot), x_n(\omega, \cdot) \rangle x_n(\omega, \cdot).
$$
Проверим сходимость ряда в норме $L_{2,2}$:
$$
\| T f \|_{2,2}^2 = \int_\Omega \left\| \sum_{n=1}^\infty \lambda_n(\omega) \langle f(\omega, \cdot), x_n(\omega, \cdot) \rangle x_n(\omega, \cdot) \right\|_{L_2(S)}^2 \, d\mu(\omega).
$$
Так как $\{ x_n(\omega, \cdot) \}_{n=1}^\infty$ ортонормированы:
$$
\left\| \sum_{n=1}^\infty \lambda_n(\omega) \langle f(\omega, \cdot), x_n(\omega, \cdot) \rangle x_n(\omega, \cdot) \right\|_{L_2(S)}^2 = \sum_{n=1}^\infty |\lambda_n(\omega)|^2 |\langle f(\omega, \cdot), x_n(\omega, \cdot) \rangle|^2.
$$
Поскольку $T_\omega$ компактен, $\sum_{n=1}^\infty |\lambda_n(\omega)|^2 < \infty$, и:
$$
\sum_{n=1}^\infty |\langle f(\omega, \cdot), x_n(\omega, \cdot) \rangle|^2 = \| f(\omega, \cdot) \|_{L_2(S)}^2.
$$
Тогда:
$$
\| T f \|_{2,2}^2 = \int_\Omega \sum_{n=1}^\infty |\lambda_n(\omega)|^2 |\langle f(\omega, \cdot), x_n(\omega, \cdot) \rangle|^2 \, d\mu(\omega) \leq \int_\Omega \sup_n |\lambda_n(\omega)|^2 \| f(\omega, \cdot) \|_{L_2(S)}^2 \, d\mu(\omega).
$$
Так как $ k(\omega, t, s) \in L_\infty(\Omega \times S^2)$, $\sup_\omega \| T_\omega \|^2 = \sup_\omega \sup_n |\lambda_n(\omega)|^2 < \infty$, и:
$$
\| T f \|_{2,2}^2 \leq \sup_\omega \| T_\omega \|^2 \int_\Omega \| f(\omega, \cdot) \|_{L_2(S)}^2 \, d\mu(\omega) = \sup_\omega \| T_\omega \|^2 \| f \|_{2,2}^2.
$$
Ряд сходится в $L_{2,2}$. Для положительно определенных ядер, это разложение согласуется с Теоремой Мерсера \cite{kudaybergenov}, где ядро $k_\omega(t, s) = \sum_{n=1}^\infty \lambda_n(\omega) x_n(\omega, t) \overline{x_n(\omega, s)}$.

(2)  Покажем, что:
$$
T = \int_m^{M + \varepsilon \mathbf{1}} \lambda \, dE_\lambda, \quad \forall \varepsilon > 0,
$$
где интеграл -- предел в $L_0^h$, понимаемый как сходимость в сильной операторной топологии (СОТ) на $L_0(\Omega)$. Семейство ортогональных проекторов $\{ E_\lambda \}_{\lambda \in L_0^h}$, заданное в Леммах \ref{L1}, \ref{lemma:projectors}, действует как:
$$
(E_\lambda f)(\omega, t) = \sum_{\{n : \lambda_n(\omega) \leq \lambda(\omega)\}} \langle f(\omega, \cdot), x_n(\omega, \cdot) \rangle x_n(\omega, t).
$$
Для каждого $\omega \in \Omega$, $T_\omega$ компактен и самосопряжен, с дискретным спектром $\{ \lambda_n(\omega) \}_{n=1}^\infty \subset [m(\omega), M(\omega)]$. Спектральное разложение:
$$
T_\omega = \sum_{n=1}^\infty \lambda_n(\omega) E_{\lambda_n(\omega)}(\omega),
$$
где $E_{\lambda_n(\omega)}(\omega) = \langle \cdot, x_n(\omega, \cdot) \rangle x_n(\omega, \cdot)$. Интеграл:
$$
\int_{m(\omega)}^{M(\omega) + \varepsilon} \lambda \, dE_\lambda(\omega) f = \sum_{\{n : \lambda_n(\omega) \leq M(\omega) + \varepsilon\}} \lambda_n(\omega) \langle f, x_n(\omega, \cdot) \rangle x_n(\omega, \cdot) = T_\omega f,
$$
так как $\lambda_n(\omega) \leq M(\omega)$, и $ E_\lambda(\omega) = I$ для $\lambda \geq M(\omega) + \varepsilon$ (Лемма \ref{lemma:projectors}). В $ L_{2,2}(\Omega \times S)$:
$$
\left( \int_m^{M + \varepsilon \mathbf{1}} \lambda \, dE_\lambda f \right)(\omega, t) = \int_{m(\omega)}^{M(\omega) + \varepsilon} \lambda \, dE_\lambda(\omega) f(\omega, \cdot)(t) = T_\omega f(\omega, \cdot).
$$
Сходимость в СОТ означает, что для $T_n = \sum_{k=1}^n \lambda_k(\omega) \Delta E_{\lambda_k}(\omega)$:
$$
\| (T_n - T) f \|_{2,2}^2 = \int_\Omega \left\| \sum_{k=n+1}^\infty \lambda_k(\omega) \langle f(\omega, \cdot), x_k(\omega, \cdot) \rangle x_k(\omega, \cdot) \right\|_{L_2(S)}^2 \, d\mu(\omega).
$$

Так как $\lambda_k(\omega) \rightarrow 0$, остаток $\sum_{k=n+1}^{\infty}\left|\lambda_k(\omega)\right|^2\left|\left\langle f(\omega, \cdot), x_k(\omega, \cdot)\right\rangle\right|^2 \rightarrow 0$ п.в. $\omega$.
Подынтегральная функция измерима по Лемме \ref{L1} п.5 и мажорируется $\sup_k\left|\lambda_k(\omega)\right|^2\|f(\omega, \cdot)\|_{L_2(S)}^2 \in L_1(\Omega)$, так как $\left\|E_\lambda\right\|=\textbf{1} $ (Лемма \ref{L1} п.3). По теореме Лебега о мажорированной сходимости, интеграл по $\Omega$ стремится к нулю.

(3) Докажем, что для любой непрерывной функции $f$ на $[m, M + \varepsilon \mathbf{1}]$:
$$
f(T) = \int_m^{M + \varepsilon \mathbf{1}} f(\lambda) \, dE_\lambda,
$$
где интеграл сходится в СОТ на $L_0(\Omega)$. Это утверждение расширяет разложение $T$, позволяя применять непрерывные функции к оператору.

 Для непрерывной функции $f$ на $[m(\omega), M(\omega) + \varepsilon] $, функциональное исчисление в гильбертовом пространстве $ L_2(S)$ определяет:
$$
f(T_\omega) g = \sum_{n=1}^\infty f(\lambda_n(\omega)) \langle g, x_n(\omega, \cdot) \rangle x_n(\omega, \cdot), \quad g \in L_2(S).
$$
Этот ряд сходится в $L_2(S)$ так как $f$ ограничена на компактном интервале ($|f(\lambda_n(\omega))| \leq \| f \|_\infty $), и:
$$
\left\| \sum_{n=1}^\infty f(\lambda_n(\omega)) \langle g, x_n(\omega, \cdot) \rangle x_n(\omega, \cdot) \right\|_{L_2(S)}^2 = \sum_{n=1}^\infty |f(\lambda_n(\omega))|^2 |\langle g, x_n(\omega, \cdot) \rangle|^2 \leq \| f \|_\infty^2 \| g \|_{L_2(S)}^2.
$$
Ортогональный проектор $ E_{\lambda_n(\omega)}(\omega) = \langle \cdot, x_n(\omega, \cdot) \rangle x_n(\omega, \cdot) $, и:
$$
f(T_\omega) = \sum_{n=1}^\infty f(\lambda_n(\omega)) E_{\lambda_n(\omega)}(\omega).
$$
Интегральное представление:
$$
f(T_\omega) = \int_{m(\omega)}^{M(\omega) + \varepsilon} f(\lambda) \, dE_\lambda(\omega),
$$
где $E_\lambda(\omega) = \sum_{\{n : \lambda_n(\omega) \leq \lambda\}} E_{\lambda_n(\omega)}(\omega)$, сходится в СОТ на $L_2(S)$, так как $E_\lambda(\omega)$-- разрешение единицы для $T_\omega$.

 Теперь покажем, что в $L_{2,2}(\Omega \times S)$, оператор $f(T)$ действует как:
$$
f(T) f(\omega, t) = f(T_\omega) f(\omega, \cdot)(t).
$$
Рассмотрим интеграл:
$$
\int_m^{M + \varepsilon \mathbf{1}} f(\lambda) \, dE_\lambda f,
$$
где $E_\lambda$-- ортогональный проектор из Леммы \ref{L1}. Для $f \in L_{2,2}(\Omega \times S)$:
$$
\left( \int_m^{M + \varepsilon \mathbf{1}} f(\lambda) \, dE_\lambda f \right)(\omega, t) = \int_{m(\omega)}^{M(\omega) + \varepsilon} f(\lambda) \, dE_\lambda(\omega) f(\omega, \cdot)(t).
$$
Для каждого $\omega\in \Omega$
$$
\int_{m(\omega)}^{M(\omega) + \varepsilon} f(\lambda) \, dE_\lambda(\omega) f(\omega, \cdot) = \sum_{n=1}^\infty f(\lambda_n(\omega)) \langle f(\omega, \cdot), x_n(\omega, \cdot) \rangle x_n(\omega, \cdot) = f(T_\omega) f(\omega, \cdot),
$$
так как $ E_\lambda(\omega)$ дискретно, и все $\lambda_n(\omega) \leq M(\omega)$. Таким образом:
$$
\left( \int_m^{M + \varepsilon \mathbf{1}} f(\lambda) \, dE_\lambda f \right)(\omega, t) = f(T_\omega) f(\omega, \cdot)(t).
$$

Теперь покажем ограниченность $f(T)$:
$$
\left\| \int_m^{M + \varepsilon \mathbf{1}} f(\lambda) \, dE_\lambda f \right\|_{2,2}^2 = \int_\Omega \left\| \sum_{n=1}^\infty f(\lambda_n(\omega)) \langle f(\omega, \cdot), x_n(\omega, \cdot) \rangle x_n(\omega, \cdot) \right\|_{L_2(S)}^2 \, d\mu(\omega).
$$
Так как:
$$
\left\| \sum_{n=1}^\infty f(\lambda_n(\omega)) \langle f(\omega, \cdot), x_n(\omega, \cdot) \rangle x_n(\omega, \cdot) \right\|_{L_2(S)}^2 = \sum_{n=1}^\infty |f(\lambda_n(\omega))|^2 |
\langle f(\omega, \cdot), x_n(\omega, \cdot) \rangle|^2
$$
$$
\leq \| f \|_\infty^2 \| f(\omega, \cdot) \|_{L_2(S)}^2,
$$
то:
$$
\left\| \int_m^{M + \varepsilon \mathbf{1}} f(\lambda) \, dE_\lambda f \right\|_{2,2}^2 \leq \| f \|_\infty^2 \int_\Omega \| f(\omega, \cdot) \|_{L_2(S)}^2 \, d\mu(\omega) = \| f \|_\infty^2 \| f \|_{2,2}^2.
$$
Оператор ограничен. Сходимость в СОТ требует, чтобы для $f_n(T) = \sum_{k=1}^n f(\lambda_k(\omega)) \Delta E_{\lambda_k}(\omega)$:
$$
\| (f_n(T) - f(T)) f \|_{2,2}^2 = \int_\Omega \left\| \sum_{k=n+1}^\infty f(\lambda_k(\omega)) \langle f(\omega, \cdot), x_k(\omega, \cdot) \rangle x_k(\omega, \cdot) \right\|_{L_2(S)}^2 \, d\mu(\omega) \to 0.
$$
Так как $f$ непрерывна, а $\lambda_k(\omega) \to 0$, остаток уменьшается, и измеримость $E_\lambda(\omega)$ обеспечивает сходимость интеграла по $\Omega$. Предел в $L_0^h$ следует из структуры модуля Капланского-Гильберта и конечности числа ненулевых $ \lambda_n(\omega) $.

 (4) Для $\lambda \in \operatorname{spm}_{\text{disc}}(T)$, пространство $\operatorname{ker}(\lambda I - T)$ состоит из $f \in L_{2,2}(\Omega \times S)$, таких что:
$$
(\lambda(\omega) I - T_\omega) f(\omega, \cdot) = 0 \quad \text{п.в. } \omega.
$$
На слое, $\operatorname{ker}(\lambda_n(\omega) I - T_\omega)$ порождено $\{ x_{n,k}(\omega, \cdot) \}_{k=1}^{m_n(\omega)}$, где $m_n(\omega) < \infty $. Тогда:
$$
f(\omega, \cdot) = \sum_{k=1}^{m_n(\omega)} c_k(\omega) x_{n,k}(\omega, \cdot), \quad c_k(\omega) \in \mathbb{C}.
$$
В $L_{2,2}(\Omega \times S)$, $f(\omega, t)$ -- измеримое сечение, $c_k(\omega) \in L_0(\Omega)$, так как $x_{n,k}(\omega, \cdot)$ измеримы (Лемма \ref{L1}). Следовательно:
$$
\operatorname{ker}(\lambda I - T) = \left\{ f(\omega, t) = \sum_{k=1}^{m_n(\omega)} c_k(\omega) x_{n,k}(\omega, t) : c_k(\omega) \in L_0(\Omega), \lambda(\omega) = \lambda_n(\omega) \right\}.
$$

Множества $A_n=\left\{\omega: \lambda_n(\omega)=\lambda(\omega)\right\} \in \Sigma$ измеримы, $m_n(\omega)<\infty, \boldsymbol{и} \cup_n A_n$ покрывает $\Omega$ (за исключением множества меры нуль), обеспечивая $\sigma$-конечность. Пространство $\operatorname{ker}(\lambda I-T)$ является модулем над $L_0(\Omega)$, так как для любой $f \in$ $\operatorname{ker}(\lambda I-T)$ и $\alpha \in L_0(\Omega)$ функция $\alpha(\omega) f(\omega, t)$ измерима и принадлежит $\operatorname{ker}(\lambda I-T)$. .

(5) Спектр:
$$
\operatorname{spm}(T) = \operatorname{mix} \left\{ \sum_{n=0}^\infty \chi_{A_n} \tilde{\lambda}_n : \tilde{\lambda}_n(\omega) = \lambda_n(\omega), \tilde{\lambda}_0 = 0, A_n \in \Sigma, A_n \cap A_m = \varnothing \right\}.
$$
Для каждого $\omega\in\Omega$ спектр $\operatorname{sp}(T_\omega) = \{ \lambda_n(\omega) \}_{n=1}^\infty \cup \{0\}$ компактен. Измеримость $\lambda_n(\omega)$ следует из Леммы \ref{L1}. Циклическая компактность $\operatorname{spm}(T)$ доказана в \cite[Следствие 3.7]{kudaybergenov2}. Для $ \lambda \in \operatorname{spm}(T)$, существуют измеримые $A_n \in \Sigma$, такие что:
$$
\lambda(\omega) = \sum_{n=0}^\infty \chi_{A_n}(\omega) \tilde{\lambda}_n(\omega).
$$
Перемешивания покрывают $\operatorname{spm}(T)$, так как $\lambda_n(\omega)$ определяют $\operatorname{sp}(T_\omega)$.
\end{proof}

\section{Пример спектрального разложения частично самосопряженного циклически компактного  частично интегрального оператора }

В данном разделе представлен пример, иллюстрирующий свойства Теоремы \ref{thm:spectral} для самосопряжённого циклически компактного частично интегрального оператора $ T: L_{2,2}(\Omega \times S) \to L_{2,2}(\Omega \times S) $.

Рассмотрим пространства $ \Omega = [0,1]\,\,   S = [0,1] $ с мерами Лебега $ \mu $ и  $ m $ соответственно.
Пусть  $ T: L_{2,2}([0,1]^2) \rightarrow L_{2,2}([0,1]^2) $ частично интегральный оператор
\[
T f(\omega, t) = \int_0^1 k(\omega, t, s) f(\omega, s) \, ds
\]
где:
\[
k(\omega, t, s) = \sum_{n=1}^3 \lambda_n(\omega) \phi_n(t) \phi_n(s)
\]
Здесь,  $ \phi_n(t) = \sqrt{2} \sin(n \pi t) $ — ортонормированный базис в $ L_2([0,1]) $, a  $
    \lambda_1(\omega) = \cos^2\left(\frac{\pi \omega}{2}\right), $  $\lambda_2(\omega) = \frac{1}{2} \sin^2(\pi \omega),$ $ \lambda_3(\omega) = \frac{1}{3} \omega^2$ собственные значения оператора.

$k(\omega, t, s)$   ограничено в $ L_\infty(\Omega \times S^2) $, самосопряжённое $( k(\omega, t, s) = \overline{k(\omega, s, t)} $), и положительно полуопределённое, так как:
\[
\int_0^1 \int_0^1 k(\omega, t, s) x(\omega, s) \overline{x(\omega, t)} \, ds \, dt = \sum_{n=1}^3 \lambda_n(\omega) \left| \int_0^1 x(\omega, t) \phi_n(t) \, dt \right|^2 \geq 0.
\]

Оператор $ T $ действует как:
\[
T f(\omega, t) = \int_0^1 k(\omega, t, s) f(\omega, s) \, ds = \sum_{n=1}^3 \lambda_n(\omega) \langle f(\omega, \cdot), \phi_n \rangle \phi_n(t),
\]
где $ \langle f(\omega, \cdot), \phi_n \rangle = \int_0^1 f(\omega, s) \phi_n(s) \, ds $. Для каждого $ \omega \in\Omega$
\[
T_\omega f_\omega(t) = \sum_{n=1}^3 \lambda_n(\omega) \langle f_\omega, \phi_n \rangle \phi_n(t).
\]
Оператор $T_\omega $ компактен, самосопряжён, с собственными значениями $ \lambda_n(\omega) $ и собственными функциями $ x_n(\omega, t) = \phi_n(t) ,\,\,  n = 1, 2, 3.$

Пусть \[
f(\omega, t) = \omega \sin(\pi t) + \sin(2 \pi t).
\]
Очевидно, что $ f \in L_{2,2}.$

Применим оператор $ T $, интегралы и спектральные конструкции к функции $ f(\omega, t) = \omega \sin(\pi t) + \sin(2 \pi t) $, демонстрируя пункты (1)--(5).

(1)
Вычислим:
\[
\langle f(\omega, \cdot), \phi_n \rangle = \int_0^1 \left( \omega \sin(\pi t) + \sin(2 \pi t) \right) \sqrt{2} \sin(n \pi t) \, dt.
\]
\begin{eqnarray*}
\langle f(\omega, \cdot), \phi_1 \rangle &=& \frac{\omega}{\sqrt{2}},\,\,\, \text{при}\,\,\, n=1,\\
\langle f(\omega, \cdot), \phi_2 \rangle &= & \frac{1}{\sqrt{2}}, \,\,\, \text{при}\,\,\, n=2,\\
\langle f(\omega, \cdot), \phi_3 \rangle &=& 0, \,\,\,\,\,\,\,\,\, \text{при}\,\,\, n=3.
\end{eqnarray*}

Тогда:
\[
T f(\omega, t) = \sum_{n=1}^3 \lambda_n(\omega) \langle f(\omega, \cdot), \phi_n \rangle \phi_n(t) =  \omega \cos^2\left(\frac{\pi \omega}{2}\right) \sin(\pi t) + \frac{1}{2} \sin^2(\pi \omega) \sin(2 \pi t).
\]
Сходимость в $ L_{2,2} $ тривиальна, так как сумма конечна и
\[
\| T f \|_{2,2}^2 = \int_0^1 \left( \frac{1}{2} \omega^2 \cos^4\left(\frac{\pi \omega}{2}\right) + \frac{1}{2} \cdot \frac{1}{4} \sin^4(\pi \omega) \right) \, d\omega < \infty.
\]

(2)
Теорема утверждает, что:
\[
T = \int_m^{M + \varepsilon \mathbf{1}} \lambda \, dE_\lambda,
\]
где интеграл сходится в СОТ.

 Границы спектра имеет вид $m(\omega) = 0,\,\,\,  M(\omega) = \max \{ \cos^2\left(\frac{\pi \omega}{2}\right), \frac{1}{2} \sin^2(\pi \omega), \frac{1}{3} \omega^2 \} \leq 1 $ и
\[
(E_\lambda f)(\omega, t) = \sum_{\{n : \lambda_n(\omega) \leq \lambda(\omega)\}} \langle f(\omega, \cdot), \phi_n \rangle \phi_n(t).
\]
Тогда
\[
\left( \int_0^{M(\omega) + \varepsilon} \lambda \, dE_\lambda f \right)(\omega, t) = \sum_{n=1}^3 \lambda_n(\omega) \langle f(\omega, \cdot), \phi_n \rangle \phi_n(t) = T f(\omega, t).
\]

(3)
Возьмём $ f(\lambda) = \lambda^2 $. Для каждого $\omega\in\Omega$
\[
f(T_\omega) f_\omega = \sum_{n=1}^3 \lambda_n(\omega)^2 \langle f_\omega, \phi_n \rangle \phi_n.
\]
Для $ f(\omega, t) $:
\begin{eqnarray*}
f(T_\omega) f_\omega &= &\cos^4\left(\frac{\pi \omega}{2}\right) \cdot \frac{\omega}{\sqrt{2}} \cdot \sqrt{2} \sin(\pi t) + \left( \frac{1}{2} \sin^2(\pi \omega) \right)^2 \cdot \frac{1}{\sqrt{2}} \cdot \sqrt{2} \sin(2 \pi t)  \\
&=&\omega \cos^4\left(\frac{\pi \omega}{2}\right) \sin(\pi t) + \frac{1}{4} \sin^4(\pi \omega) \sin(2 \pi t).
\end{eqnarray*}
Следовательно
\[
\left( \int_0^{M(\omega) + \varepsilon} \lambda^2 \, dE_\lambda f \right)(\omega, t) = \omega \cos^4\left(\frac{\pi \omega}{2}\right) \sin(\pi t) + \frac{1}{4} \sin^4(\pi \omega) \sin(2 \pi t).
\]

(4)
Для $\lambda(\omega)=\lambda_1(\omega)=\cos ^2\left(\frac{\pi \omega}{2}\right)$
\[
\operatorname{ker}\left(\lambda_1(\omega) I-T_\omega\right)=\operatorname{span}\{\sqrt{2} \sin (\pi t)\},
\]
 с кратностью 1.

  В $L_{2,2}(\Omega \times S) $ имеем $\operatorname{ker}(\lambda I-T)=\left\{c_1(\omega) \sqrt{2} \sin (\pi t): c_1(\omega) \in L_0([0,1])\right\}$. Функция $f(\omega, t)=$ $\omega \sin (\pi t)+\sin (2 \pi t)$ не принадлежит $\operatorname{ker}(\lambda I-T)$ из-за члена $\sin (2 \pi t)$. Аналогично, для $\lambda(\omega)=\lambda_2(\omega)=\frac{1}{2} \sin ^2(\pi \omega),$
   \[\operatorname{ker}\left(\lambda_2(\omega) I-T_\omega\right)=\operatorname{span}\{\sqrt{2} \sin (2 \pi t)\}.\] Пространство $\operatorname{ker}(\lambda I-T)$ -- модуль над $L_0(\Omega)$, так как $\alpha(\omega) f(\omega, t) \in \operatorname{ker}(\lambda I-T)$ для $f \in \operatorname{ker}(\lambda I-$ $T),\,\, \alpha \in L_0(\Omega) . $

(5) Для каждого $\omega\in\Omega$ спектр оператора $ T_\omega$ имеет следующий вид:
  \[
  \operatorname{sp}(T_\omega) = \left\{ \cos^2\left(\frac{\pi \omega}{2}\right), \frac{1}{2} \sin^2(\pi \omega), \frac{1}{3} \omega^2, 0 \right\} ,
  \]
  а циклический модулярный спектр оператора $T$
\[
\operatorname{spm}(T) = \left\{ \lambda \in L_0^h : \lambda(\omega) \in \left\{ \cos^2\left(\frac{\pi \omega}{2}\right), \frac{1}{2} \sin^2(\pi \omega), \frac{1}{3} \omega^2, 0 \right\} \text{ для \,п.в. } \omega \right\}.
\]
Для $ A_1 = [0, \frac{1}{3}), A_2 = [\frac{1}{3}, \frac{2}{3}), A_3 = [\frac{2}{3}, 1], A_0 = \varnothing $:
\[
\lambda(\omega) = \chi_{A_1}(\omega) \cos^2\left(\frac{\pi \omega}{2}\right) + \chi_{A_2}(\omega) \frac{1}{2} \sin^2(\pi \omega) + \chi_{A_3}(\omega) \frac{1}{3} \omega^2
\]
принадлежит $ \operatorname{spm}(T) $.  Согласно \cite[Следствие~3.7]{kudaybergenov2}  $ \operatorname{spm}(T) $ циклически компактен.

\vspace{1cm}

\textsl{Каримберген Кадирбергенович Кудайбергенов$^1$},

\textsl{Аллабай Джалгасович Арзиев$^2$},

\textsl{Парахатдиин Рахман улы Орынбаев$^3$}.

\vspace{0.3cm}
$^{1,3}$ Владикавказский научный центр РАН,

ул. Вильямся 1, с. Михайловское, г.Владикавказ, 363110, Россия. E-mail: karim2006@mail.ru.

\vspace{0.3cm}
$^2$  Институт математики имени В.И.Романовского АН РУз,

ул. Университетская 9, Алмазарский район, г. Ташкент, 100174, Узбекистан.

E-mail: allabayarziev@inbox.ru, \,paraxatorinbaev@gmail.com.

\vspace{0.3cm}
$^2$ Каракалпакский государственный университет имени Бердаха,

ул. Ч.Абдирова 1, г. Нукус, 230100, Узбекистан. E-mail: allabayarziev@karsu.uz.

\label{lastpage}

\end{document}